\newcommand{\stkout}[1]{\ifmmode\text{\sout{\ensuremath{#1}}}\else\sout{#1}\fi}
\DeclareMathOperator*{\Tr}{Tr}
\DeclareMathOperator*{\range}{R}
\DeclareMathOperator*{\Span}{Span}
\newtheorem{theorem}{Theorem}[section]
\newtheorem{remark}[theorem]{Remark}
\newtheorem{definition}[theorem]{Definition}
\def \E{\mathsf{E}}
\def \P{\mathbb{P}}
\def \R{\mathbb{R}}
\def\d{\mathrm{d}}
\definecolor{red}{rgb}{1.0,0.0,0.0}
\definecolor{blu}{rgb}{0.0,0.0,1.0}
\definecolor{gre}{rgb}{0.03,0.50,0.03}
\def \epsilon{\varepsilon}
\title[Partial Regularity of  Semiconvex Viscosity Supersolutions to Nonlinear PDEs]{Partial Regularity of Semiconvex Viscosity Supersolutions to {Fully nonlinear} elliptic HJB equations and Applications to Stochastic Control} 
\author[Federico]{Salvatore Federico}
\author[Ferrari]{Giorgio Ferrari}
\author[Rosestolato]{Mauro Rosestolato}
\address{S.~Federico: Dipartimento di Matematica, Universit\`a di Bologna,  Piazza di Porta S.\ Donato 5, 40126, Bologna, Italy}
\email{\href{mailto:s.federico@unibo.it}{s..federico@unibo.it}}
\address{G.~Ferrari: Center for Mathematical Economics (IMW), Bielefeld University, Universit\"atsstrasse 25, 33615, Bielefeld, Germany}
\email{\href{mailto:giorgio.ferrari@uni-bielefeld.de}{giorgio.ferrari@uni-bielefeld.de}}
\address{M.~Rosestolato: Dipartimento di Economia, Universit\`a di Genova, Via F.\ Vivaldi 5, 16126, Genova, Italy}
\email{\href{mailto:mauro.rosestolato@unige.it}{mauro.rosestolato@unige.it}}
\date{\today}
\numberwithin{equation}{section}
\begin{document}

\begin{abstract} 

In this note, we demonstrate that a locally semiconvex viscosity supersolution to a possibly degenerate fully nonlinear elliptic Hamilton-Jacobi-Bellman (HJB) equation is differentiable along the directions spanned by the range of the coefficient associated with the second-order term. The proof leverages techniques from convex analysis combined with a contradiction argument. This result has significant implications for various stationary stochastic control problems. In the context of drift-control problems, it provides a pathway to construct a candidate optimal feedback control in the classical sense and establish a verification theorem. Furthermore, in optimal stopping and impulse control problems, when the second-order term is nondegenerate, the value function of the problem is shown to be differentiable.
\end{abstract}

\maketitle

\smallskip

{\textbf{Keywords}}: viscosity solution; semiconvexity; Hamilton-Jacobi-Bellman equation; stochastic control; optimal stopping; impulse stochastic control; feedback control; smooth-fit principle.

\smallskip

{\textbf{MSC2020 subject classification}}: 35D40, 26B25, 35R35, 49L25, 49J40, 60G40.


\section{Introduction}
\label{introduction}
Let ${A}$ be a nonempty set, $\mathcal{O} \subseteq \mathbb{R}^n$, 
$\sigma\colon \mathcal{O} \times {A} \to \mathbb{R}^{n \times m}$, $\beta\colon \mathcal{O} \times {A} \to \mathbb{R}^n$, 
$g, \rho\colon \mathcal{O} \times {A} \to \mathbb{R}$, and define 
\begin{equation}\label{2024-09-22:00}
\mathcal{L}(a,x,r,p,P) :=
g(x,a) + \langle \beta(x,a), p \rangle + \frac{1}{2} \Tr \left( \sigma(x,a)\sigma^*(x,a) P \right) - \rho(x,a)r,
\end{equation}
and
\begin{equation}\label{eq:2024-10-08:01}
\mathcal{H}(x,r,p,P) := \sup_{a \in {A}} \mathcal{L}(a,x,r,p,P), \quad x \in \mathcal{O}, \ r \in \mathbb{R}, \ p \in \mathbb{R}^n, \ P \in \mathcal{S}_n,
\end{equation}
where $\mathcal{S}_n$ denotes the set of real symmetric $n \times n$ matrices.
We consider the stationary Hamilton-Jacobi-Bellman (HJB) equation
\begin{equation}\label{2024-07-28:12}
\mathcal{H}(x, v(x), Dv(x), D^2v(x)) = 0.
\end{equation}
Given a locally semiconvex viscosity supersolution $v$ to this (\emph{possibly degenerate}) fully nonlinear elliptic PDE, we prove in Theorem \ref{prop:main} that $v$ is continuously differentiable along any direction in the range of the coefficient $\sigma(x,a)$ as $a$ varies in the set $A$.

{Regularity of semiconvex viscosity supersolutions for a class of fully nonlinear non-degenerate PDEs has been studied in \cite{BM}. Here, we focus on \emph{partial} regularity for \emph{possibly degenerate} HJB-type PDEs as described above.} 
Our result has immediate consequences for stationary, discounted stochastic optimal control problems for multi-dimensional Itô diffusions $X$, such as regular stochastic control problems, as well as optimal stopping and impulse control problems. In fact, the dynamic programming principle implies that the value function $V$ in these problems is a viscosity (super)solution to the corresponding dynamic programming equation, which entails that it is a viscosity supersolution of an equation such as \eqref{2024-07-28:12}. Moreover, semiconvexity is a well-known property in (stochastic) optimal control theory (see \cite{CannarsaSinestrari} and \cite[Chapt.\ 4.4]{YongZhou} for the regular control case).

To the best of our knowledge, this is the first paper linking semiconvexity and the viscosity supersolution property in order to obtain 	\emph{partial regularity} of the value function for regular control, impulse control, and optimal stopping problems in a general setting. This partial regularity is particularly relevant in regular control problems where the control acts only on the drift of the system. It also provides a directional \emph{smooth-fit principle} useful in characterizing the optimal exercise boundary in stopping and impulse control problems. We discuss in more details these features below.

In regular stochastic control problems, the decision maker adjusts the drift and/or diffusion coefficients of a stochastic differential equation (SDE) to optimize a performance criterion depending on the state variable $X$ and the control process. Semiconvexity (or semiconcavity) of the value function is well-established when the problem data exhibit these properties (see \cite{CannarsaSinestrari} for the deterministic case and \cite[Prop.\ 4.5]{YongZhou} for the stochastic case). Our result applies directly in this context, enabling the construction of a candidate optimal control in feedback form and the establishment of a verification theorem under the viscosity solution framework (see \cite[Ch.\,5, Sec.\,2]{YongZhou}). Additionally, under hypoellipticity assumptions for the linear part of the second-order operator, this partial regularity can lead to classical solutions of the HJB equation, providing a stronger basis for the verification theorem.

In optimal stopping problems, where the aim is to choose an optimal stopping time to maximize a performance criterion, the value function $V$ often satisfies a variational inequality. Regularity properties like the \emph{smooth-fit principle} are crucial for understanding the structure of the continuation and stopping regions (see \cite{peskir2006optimal} for an exposition). Our findings contribute by showing $C^1$ differentiability in relevant directions even for degenerate dynamics.

For impulse control problems, where the decision maker determines the timing and magnitude of jumps in the state process, our approach provides a simple proof that semiconvexity and the viscosity supersolution property ensure directional differentiability of the value function. This facilitates characterizing optimal policies even in degenerate cases, where regularity results are typically harder to obtain.

The rest of the paper is organized as follows. Section \ref{sec:setting} introduces the setting and main result. Section \ref{sec:drift-control} explores applications to drift-control problems, while Section \ref{sec:OSIC} addresses optimal stopping and impulse control problems.


\subsection*{Notation} Throughout the paper, the set $\mathbb{N}$ denotes the set of natural numbers $\mathbb{N} = \{1, 2,\dots\}$, we let $n, m \in \mathbb{N}$, and we use the notation $\R^n_{++}\coloneqq (0,\infty)^n$. 

We denote by $B_{R}(x_{0})$ the the open ball of radius $R>0$ centered at $x_{0}\in \R^{n}$. We write $\mathcal{O} \subset \R^n$ for an open set and $C(\mathcal{O}; \R^n)$ (respectively, by $C(\mathcal{O}; \R^{n\times m}))$ for the space of continuous functions from $\mathcal{O}$ to $\R^n$ (respectively, from $\mathcal{O}$ to $\R^{n\times m}$). Furthermore, for $k\in \mathbb{N}$, we let $C^k(\mathcal{O}; \R^n)$ (respectively, $C^k(\mathcal{O}; \R^{n\times m}))$ be the space of functions from $\mathcal{O}$ to $\R^n$ (respectively, from $\mathcal{O}$ to $\R^{n\times m}$) that are $k$ times continuously differentiable.  The gradient and the Hessian of $C^2$ functions are denoted by $D$ and $D^2$, respectively.

We write $\mathcal{L}(\R^n)$ for the set of linear operators on $\mathbb{R}^n$. By $\range(\sigma)$, we denote the range of a matrix $\sigma\in\R^{n\times m}$.
Also, with a slight abuse of notation, we shall use the symbol $|\cdot|$ both for the Euclidean norm on $\R^n$ and for the Frobenius norm on $\R^{n\times m}$. The symbol $\langle \cdot\,, \cdot \rangle$ denotes the inner product on $\R^n$. 


\section{Partial regularity of semiconvex viscosity supersolutions to HJB equations}
\label{sec:setting}

In this section, we state and prove our main result.
In the following,
given a subspace $S\subset \mathbb{R}^n$,
 and $x\in \mathcal{O}$,  
 we denote by $D^-_{S}v(x)$ (\emph{resp.,\ }
 $D_Sv(x)$) the subdifferential (\emph{resp.,\ } differential) of the restriction
 $v_{|_{(x+S)\cap \mathcal{O}}}$ of $v$ to $(x+S)\cap \mathcal{O}$, that is,
\begin{equation*}
  \begin{split}
      D^-_Sv(x)\coloneqq&  \left\{
    p\in S \colon
    \liminf_{\substack{y\rightarrow x\\y\in (x+S)\cap \mathcal{O}}}
    \frac{v(y)-v(x)-\langle p, y-x\rangle}{|y-x|}\geq 0
  \right\},\\\\
      D_Sv(x)\coloneqq&  \left\{
    p\in S\colon
    \lim_{\substack{y\rightarrow x\\y\in (x+S)\cap \mathcal{O}}}
    \frac{v(y)-v(x)-\langle p, y-x\rangle}{|y-x|}=0
  \right\}.
  \end{split}
\end{equation*}
\smallskip
 Notice that, if $D_Sv(x)$ is nonempty, then it is a singleton.
 With abuse of notation,  we shall identify a non-empty $D_Sv(x)$  with its unique element.
We denote by $P_S$ the projection of $\mathbb{R}^n$ onto $S$.
 We recall the definitions  of semiconvex function and of viscosity supersolution.
\begin{definition}[Semiconvexity]
  We say that a function $v:\mathcal{O}\to\R$ is locally semiconvex if, for each $x_{0}\in \mathcal{O}$, there exists $R>0$ with ${B}_{R}(x_{0})\subset \mathcal{O}$  and   $c_{R}\geq 0$ such that 
  $$
v(\lambda x+(1-\lambda)y)-  \lambda v(x)-(1-\lambda)v(y)\leq c_{R}\lambda(1-\lambda)|x-y|^{2}, \ \ \ \forall \lambda\in[0,1], \ \forall x,y\in B_{R}(x_{0}).
  $$ \end{definition}

\medskip
\begin{definition}[Viscosity supersolution]
  We say that  $v:\mathcal{O}\to\R$ is  a viscosity \emph{supersolution} to   HJB \eqref{2024-07-28:12} at $x\in \mathcal{O}$ if it is lower semicontinuous  at $x$ and, for every $\varphi\in C^2(\mathcal{O})$
such that $v-\varphi$ attains a 
local  minimum 
at $x$, it holds
  \begin{equation*}
    -\mathcal{H} (x,v(x),D\varphi(x),D^2\varphi(x))\geq 0.
  \end{equation*}
\end{definition}
\medskip

We now  turn to the main result.
\begin{theorem}
  \label{prop:main}
  Let $v\colon \mathcal{O} \to \R$ be a locally semiconvex supersolution at each $x\in \mathcal{O}$ to HJB \eqref{2024-07-28:12}. 

\smallskip
\begin{enumerate}[(i)]

\item  \label{PropIt1}
Let
$R (x)\coloneqq \Span\big\{\range(\sigma(x,a))\colon a\in A\big\}$ and
assume $R (x)\neq \{0\}$.
Then,
$v$ is differentiable at $x$ along the subspace $R (x)$,
and 
\begin{equation*}
  D_{R (x)}v(x)=P_{R (x)}D^-v(x).
\end{equation*}


\item \label{PropIt2}
 Let $\mathcal{D}\subset \mathcal{O}$ be
 open,
 and
  let $\{S(x)\}_{x\in \mathcal{D}}$ be a family of non-trivial subspaces of $\mathbb{R}^n$
  such that
$S(x)\subseteq R(x)$ 
for each $x\in\mathcal{D}$.
Assume that this family of subspaces  varies continuously with $x$, in the sense that the map
\begin{equation*}
 P_S\colon \mathcal{D}\rightarrow \mathcal{L}(\mathbb{R}^n), \quad x
 \mapsto P_{S(x)}
\end{equation*}
is continuous.\footnote{In particular, the map $\mathcal{D}\rightarrow \mathbb{R},\ x\mapsto \dim S(x)$, must be  constant on each connected component of $\mathcal{D}$. On the other hand, if there exists a selection $\{a_x\}_{x\in \mathcal{D}}\subseteq A$
 such that,    $\dim \range(\sigma(x,a_x))$ is non-zero and constant on each connected component of $\mathcal{D}$,  
    and such that
  $\mathcal{D}\rightarrow \mathbb{R}^{n\times m},\ x\mapsto
  \sigma(x,a_x)$, is continuous,
  then such  assumption is satisfied for $S(x)=\range(\sigma(x,a_x))$.
}
Then,
the map
\begin{equation*}
 D_Sv\colon \mathcal{D}\rightarrow \mathbb{R}^n, \quad  x \mapsto D_{S(x)}v(x)
\end{equation*}
is well-defined and continuous.
In particular, if $R(x)=n$ for all $x\in \mathcal{ D}$, then $v\in C^1(\mathcal{ D})$.
\end{enumerate}
\end{theorem}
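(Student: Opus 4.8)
The plan is to exploit semiconvexity to reduce everything to a one-dimensional question along directions in $\range(\sigma(x,a))$, and then to use the supersolution inequality to rule out the ``kink'' that would otherwise be permitted by semiconvexity alone. First I would recall the standard facts about locally semiconvex functions: such a $v$ is locally Lipschitz, hence the (Fréchet) subdifferential $D^-v(x)$ is nonempty, compact and convex at every $x$, it coincides with the convex hull of the set of reachable gradients, and $v$ is differentiable at $x$ if and only if $D^-v(x)$ is a singleton. Moreover, for the restriction to an affine subspace $x+S$, semiconvexity is inherited, so $D^-_Sv(x)$ is again nonempty, compact, convex, and one has the elementary inclusion $P_S D^-v(x)\subseteq D^-_Sv(x)$. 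The core of part \eqref{PropIt1} is therefore the reverse inclusion together with the claim that $D^-_{R(x)}v(x)$ is a singleton; once $D^-_{R(x)}v(x)=\{p\}$ we get differentiability along $R(x)$ and, combined with the inclusion above, the identity $D_{R(x)}v(x)=P_{R(x)}D^-v(x)$.

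To prove $D^-_{R(x)}v(x)$ is a singleton I would argue by contradiction. Suppose it contains two distinct points; since it is convex, by working along the segment joining them and using that $R(x)=\Span\{\range(\sigma(x,a))\}$, I can find a single $a\in A$ and a unit vector $e\in\range(\sigma(x,a))$ such that the restriction $t\mapsto v(x+te)$ has, at $t=0$, a left derivative strictly smaller than its right derivative (a genuine convex kink, after adding the quadratic correction $c_R t^2$ to make it convex). Write $e=\sigma(x,a)\xi$ for some $\xi\in\R^m$. The idea is to build, for suitable $p\in D^-_{R(x)}v(x)$ chosen in the ``interior'' of the kink, a $C^2$ test function $\varphi$ touching $v$ from below at $x$ whose gradient is $p$ (extended off $x+R(x)$ by the full subgradient datum) and whose Hessian $D^2\varphi(x)$ can be made arbitrarily large in the $e\otimes e$ direction — concretely $\varphi(y)=\langle p,y-x\rangle - \tfrac12\mu\langle y-x,(e\otimes e)(y-x)\rangle + (\text{small controlled remainder})$ with $\mu\to+\infty$. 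This is possible precisely because of the strict kink: near $t=0$, $v(x+te)-\langle p,te\rangle$ stays above a downward parabola of arbitrarily large curvature while along directions complementary to $e$ the Lipschitz/semiconvex bounds give enough room. Plugging such $\varphi$ into the supersolution inequality $-\mathcal{H}(x,v(x),D\varphi(x),D^2\varphi(x))\ge0$ and using that $\mathcal{H}\ge\mathcal{L}(a,\cdot)$, the term $\tfrac12\Tr(\sigma\sigma^*D^2\varphi(x))$ contains $-\tfrac12\mu|\sigma^*(x,a)e|^2\cdot(\text{something})$; the key computation is that $\Tr(\sigma(x,a)\sigma^*(x,a)\,e\otimes e)=|\sigma^*(x,a)e|^2>0$ because $e\in\range(\sigma(x,a))$, so letting $\mu\to+\infty$ sends $\mathcal{H}\to+\infty$, contradicting the supersolution inequality. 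The delicate point — and what I expect to be the main obstacle — is the careful construction of the test function: one must ensure $\varphi$ touches $v$ from below in a full neighborhood in $\mathcal{O}$ (not just along the line $x+\R e$), controlling the behaviour in the directions orthogonal to $e$ uniformly in $\mu$; this is where local semiconvexity (providing a uniform lower paraboloid bound $v(y)\ge v(x)+\langle p,y-x\rangle-c_R|y-x|^2$ for $p\in D^-v(x)$) must be combined with the strict gap in the one-dimensional derivative to absorb the growing curvature only in the $e$-direction.

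For part \eqref{PropIt2}: by part \eqref{PropIt1}, $D_{S(x)}v(x)$ is well-defined for every $x\in\mathcal{D}$ since $S(x)\subseteq R(x)$ and differentiability along $R(x)$ implies differentiability along any subspace of it, with $D_{S(x)}v(x)=P_{S(x)}D^-v(x)=P_{S(x)}D_{R(x)}v(x)$. For continuity, fix $x_0\in\mathcal{D}$ and $x_k\to x_0$. Using local Lipschitzness of $v$, the gradients $D_{R(x_k)}v(x_k)$, or more simply any selection $q_k\in D^-v(x_k)$ with $P_{R(x_k)}q_k=D_{R(x_k)}v(x_k)$, are bounded; extract a convergent subsequence $q_k\to q$. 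By upper semicontinuity of the set-valued map $x\mapsto D^-v(x)$ (a standard property of subdifferentials of semiconvex — indeed locally Lipschitz — functions), $q\in D^-v(x_0)$, and then $P_{S(x_k)}q_k\to P_{S(x_0)}q$ by continuity of $x\mapsto P_{S(x)}$; since $S(x_0)\subseteq R(x_0)$ where $v$ is differentiable, $P_{S(x_0)}q=P_{S(x_0)}D^-v(x_0)=D_{S(x_0)}v(x_0)$, independently of the chosen subsequence. Hence $D_Sv$ is continuous at $x_0$. Finally, if $R(x)=\R^n$ (i.e.\ $\dim R(x)=n$) for all $x\in\mathcal{D}$, take $S(x)=\R^n$, so $P_{S(x)}=\mathrm{Id}$ trivially varies continuously; then $D_{\R^n}v=Dv$ is well-defined and continuous on $\mathcal{D}$, i.e.\ $v\in C^1(\mathcal{D})$.
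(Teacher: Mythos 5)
Your overall architecture --- write $v=v_0-\tfrac{\kappa}{2}|\cdot|^2$ with $v_0$ convex, reduce a putative non-differentiability along $R(x)$ to a convex kink along a direction lying in $\range(\sigma(x,a))$ for a \emph{single} $a$, and then contradict the supersolution inequality with test functions touching $v$ from below whose second-order term blows up --- is exactly the paper's, and your treatment of part (ii) (local boundedness of $D^-v$, upper semicontinuity of $x\mapsto D^-v(x)$, continuity of $P_S$) coincides with the paper's proof. The gap is in the one step you yourself flag as ``the main obstacle'': the construction of the test function. As written it fails, for two reasons.

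First, the sign: you take $\varphi(y)=\langle p,y-x\rangle-\tfrac12\mu\langle y-x,(e\otimes e)(y-x)\rangle+\dots$, so $D^2\varphi(x)$ has a large \emph{negative} component and $\tfrac12\Tr(\sigma\sigma^*D^2\varphi(x))\to-\infty$ as $\mu\to\infty$; this is perfectly compatible with $\mathcal{H}\le 0$ and yields no contradiction (indeed every semiconvex function admits touching paraboloids from below with negative curvature at every point). The contradiction requires touching functions with arbitrarily large \emph{positive} curvature, which is precisely what the kink makes possible. Second, and more seriously, the direction: the kink gives the lower bound $v_0(y)-v_0(0)-\langle p,y\rangle\ge\tfrac12\left|\langle p_1-p_2,y\rangle\right|$ (with $p=\tfrac12(p_1+p_2)$ and $p_1,p_2\in D^-v_0(0)$), which is a function of $\langle p_1-p_2,y\rangle$ alone and vanishes on the hyperplane $(p_1-p_2)^\perp$. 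Hence a quadratic bump $+\tfrac{\mu}{2}\langle e,y-x\rangle^2$ in the spatial direction $e$ cannot be fitted below $v$ for large $\mu$: take $y-x\perp(p_1-p_2)$ with $\langle e,y-x\rangle\ne0$. The curvature must instead be placed along $(p_1-p_2)\otimes(p_1-p_2)$; the paper takes $\hat\varphi_j(y)=\rho_j\bigl(\tfrac12\langle p_1-p_2,y\rangle\bigr)+\tfrac12\langle p_1+p_2,y\rangle$ with $\rho_j$ even, $\rho_j(t)\le|t|$ and $\rho_j''(0)\to\infty$, so that $\hat\varphi_j\le\langle p_1,\cdot\rangle\vee\langle p_2,\cdot\rangle\le v_0-v_0(0)$ on a full neighbourhood automatically, and the relevant trace is $\Tr(\sigma^*D^2\varphi_j\sigma)=\tfrac14\rho_j''(0)\,|\sigma^*(p_1-p_2)|^2-\kappa|\sigma|^2$. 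The quantity that must be nonzero is therefore $\sigma^*(x,a)(p_1-p_2)$ --- which follows from $\langle p_1-p_2,h\rangle\ne0$ with $h=\sigma(x,a)\hat h$ --- and not $\sigma^*(x,a)e$ as in your computation. With these two corrections your argument becomes the paper's.
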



\begin{proof}
We prove the two items separately.
\vskip0.5em

\noindent \emph{Proof of (\ref{PropIt1}).}
We split the proof into three claims.
  \vskip0.5em
\noindent \textsc{Claim I.}
  Let $a\in {A}$ be such that $\range  \left( \sigma(x,a) \right) \neq \{0\}$, and let
$h\in \range  \left( \sigma(x,a) \right) \setminus \{0\}$.
Then $v$
is differentiable at $x$ along the direction $h$.
  \vskip0.5em

\noindent Let $ x,h$ be as in \textsc{Claim I}. Without loss of generality, up to translation, we can assume $ x=0$.
Since $v$ is locally semiconvex,
by \cite[Prop.\ 1.1.3]{CannarsaSinestrari} (equivalences (a)--(c)),
there exist a neighbourhood $\mathcal{U}\subset \mathcal{O}$ of $0$,
a convex function $v_0\colon \mathcal{U}\rightarrow \mathbb{R}$, and $\kappa\geq 0$ such that
\begin{equation}\label{2024-07-24:00}
  v(y) = v_0(y) - \frac\kappa{2}|y|^2, \quad \forall y \in \mathcal{U}.
\end{equation}
By \eqref{2024-07-24:00}, it is sufficient to show that
$v_0$ is differentiable at $0$ along the direction $h$.
Assume, by contradiction, that it is not. Then, by
\cite[Theorem~23.4]{rockafellar1970}, there exist
$p_1,p_2\in D^-v_0(0)$ such that
\begin{equation}
  \label{2024-07-24:02}
    \langle p_1,h\rangle
    <
        \langle p_2,h\rangle.
\end{equation}
By convexity of $v_0$, we have
\begin{equation}\label{2024-07-24:06}
  v_0(y)\geq  v_0(0)+\langle p_1,y\rangle\vee \langle p_2,y\rangle\qquad \forall y\in \mathcal{U}.
\end{equation}
Since
$h\in \range  \left( \sigma(0,a) \right)   \setminus \{0\}$,
 there exists $\hat h\in \mathbb{R}^m$ such that
 $h=\sigma(0,a)\hat h$.
Then
inequality
\eqref{2024-07-24:02} entails
\begin{equation}
  \label{sstarp}
  \sigma^*(0,a)(p_1-p_2)\neq 0.
\end{equation}

\noindent
For  $j\in \mathbb{N}$, let $\rho_j\colon \mathbb{R}\rightarrow \mathbb{R}$ be a function with the following properties:
\begin{subequations}
  \begin{align}
    \label{2024-07-28:01}
    &\rho_j(0)=0\\
    \label{2024-07-28:02}
    &\rho_j(t)=\rho_j(-t)\qquad \forall t\in \mathbb{R}\\
    \label{2024-07-28:03}
    &  \rho_j(t)\leq |t|\qquad\qquad \forall t\in \mathbb{R}\\
    \label{2024-07-28:04}
    &   \rho_j\in C^2(\mathbb{R})\\
    \label{2024-07-28:05}
    &   \rho_j'(0)=0\\
    \label{2024-07-28:06}
    &     \rho_j''(0)= \lambda_j\coloneqq 4\frac{j+\kappa|\sigma(0,a)|^2}{|\sigma^*(0,a)(p_1-p_2)|^2}
  \end{align}
\end{subequations}
For example, the function $\displaystyle{\rho_j(t)=-\lambda_j^3 t^4+\frac{\lambda_j}{2}t^2}$ fulfills the requirements  \eqref{2024-07-28:01}--\eqref{2024-07-28:06}.
Now, define 
\begin{equation*}
\hat\varphi_j\colon \mathbb{R}^n\rightarrow\mathbb{R}, \ \ \ \ 
  \hat\varphi_j(y)\coloneqq \rho_j
  \left(
    \langle p_1,y\rangle
    \vee
    \langle p_2,y\rangle
    -\frac{1}{2}
    \langle p_1+p_2,y\rangle
  \right)
  +
  \frac{1}{2}
  \langle p_1+p_2,y\rangle\qquad \forall y\in \mathbb{R}^n.
\end{equation*}
Noticing that
\begin{equation*}
  \langle p_1,y\rangle
  \vee
  \langle p_2,y\rangle
  -\frac{1}{2}
  \langle p_1+p_2,y\rangle
  =\frac{1}{2}
  \left| \langle p_1-p_2,y\rangle\right|,
\end{equation*}
we have
\begin{equation*}
  \begin{split}
    \hat\varphi_j(y)
    =&
    \rho_j \left(\frac{1}{2}
     \left| \langle p_1-p_2,y\rangle\right|
   \right)   +  \frac{1}{2} \langle p_1+p_2,y\rangle\\
   =&\mbox{(by \eqref{2024-07-28:02})}\\
   =&
   \rho_j\left(\frac{1}{2}
       \langle p_1-p_2,y\rangle
   \right)   +  \frac{1}{2} \langle p_1+p_2,y\rangle\qquad \forall y\in \mathbb{R}^n.
  \end{split}
\end{equation*}
The last expression tells us that
$\hat\varphi_j\in C^2(\mathbb{R}^n)$, and
\begin{subequations}
  \begin{equation}
    \label{2024-07-28:08}
    D\hat\varphi_j(0)=\frac{1}{2}(p_1+p_2) 
  \end{equation}
  \begin{equation}
    \label{2024-07-28:09}
    \langle D^2\hat\varphi_j(0)y,z
    \rangle
    =\frac{1}{4}\rho''_j(0)
    \langle
    p_1-p_2,y
    \rangle   \langle
    p_1-p_2,z
    \rangle\qquad \forall y,z\in \mathbb{R}^n.
  \end{equation}
\end{subequations}
Moreover, for all $y\in \mathcal{U}$,
\begin{equation}\label{2024-07-28:07}
  \begin{split}
      \hat\varphi_j(y)-\frac{\kappa}{2}|y|^2\leq& \mbox{(by \eqref{2024-07-28:03})}
  \leq
  \langle p_1,y\rangle \vee \langle p_2,y\rangle-\frac{\kappa}{2}|y|^2\\
  \leq&
  \mbox{(by \eqref{2024-07-24:06})}
  \leq v_0(y)-\frac{\kappa}{2}|y|^2-v_0(0)\\
  \leq &  \mbox{(by \eqref{2024-07-24:00})}
  \leq v(y)-v(0)  .
\end{split}
\end{equation}
Define
\begin{equation*}
  \varphi_j(y)\coloneqq \hat\varphi_j(y)-\frac{\kappa}{2}|y|^2 \qquad \forall y\in \mathbb{R}^n.
\end{equation*}
By \eqref{2024-07-28:08} and \eqref{2024-07-28:09}, we have
\begin{subequations}
  \begin{equation}
    \label{2024-07-28:10}
    D\varphi_j(0)=\frac{1}{2}(p_1+p_2) 
  \end{equation}
  \begin{equation}
    \label{2024-07-28:11}
    \langle D^2\varphi_j(0)y,z
    \rangle
    =\frac{1}{4}\rho''_j(0)
    \langle
    p_1-p_2,y
    \rangle   \langle
    p_1-p_2,z
    \rangle-\kappa\langle y,z\rangle
    \qquad \forall y,z\in \mathbb{R}^n.
  \end{equation}
\end{subequations}
By 
\eqref{2024-07-28:11},
\begin{equation}\label{2024-07-28:13}
  \begin{split}
    \Tr
\left(  \sigma^*(0,a)D^2\varphi_j(0) \sigma(0,a)\right)
=&
\frac{1}{4}\rho''_j(0)
  |
  \sigma^*(0,a) (p_1-p_2)|^2
-\kappa|\sigma(0,a)|^2.\\
=&\textrm{(recalling \eqref{2024-07-28:06})}\\
=&j.
\end{split}
\end{equation}

\noindent By \eqref{2024-07-28:07}, $v-\varphi_j$ attains a  local minimum at $0$.
Since $v$ is a viscosity supersolution at $0$ of
\eqref{2024-07-28:12},
we must have, for all $j\in \mathbb{N}$,
\begin{equation}\label{2024-09-09:04}
  \begin{split}
    0 &\geq \mathcal{H}  \Big(
    0,v(0),D\varphi_j(0),D^2\varphi_j(0) \Big) \\
    &\geq
    \mathcal{L} \left(a, 0,v(0), D\varphi_j(0),  \Tr    \left(  \sigma^*(0,a)D^2\varphi_j(0) \sigma(0,a)\right) \right)  \\
    &=\textrm{(by
      \eqref{2024-07-28:10} and
      \eqref{2024-07-28:13})}
    =
    \mathcal{L} \left(a, 0,v(0), \frac{1}{2}(p_1+p_2),j \right).     
  \end{split}
\end{equation}
Letting $j$ tend to $\infty$  in \eqref{2024-09-09:04}, we obtain
\begin{equation*}
  0\geq \lim_{j\rightarrow\infty }
  \mathcal{L} \left(a, 0,v(0), \frac{1}{2}(p_1+p_2),j \right) =+\infty,
\end{equation*}
which provides the desired contradiction and completes the proof of \textsc{Claim I}.

\vskip0.5em
\noindent \textsc{Claim II.}
Let
$h\in R(x) \setminus \{0\}$.
Then $v$ is
differentiable at $x$ along the direction $h$.
\vskip0.5em
\noindent
As done to prove \textsc{Claim I}, we can assume $x=0$, and
we show that 
the convex function $v_0$ introduced above is differentiable at $x$ along $h$.
We argue by contradiction, assuming  that $v_0$ is not differentiable at $x$ in the direction $h$.
Then, by
\cite[Theorem~23.4]{rockafellar1970}, there exist
$p_1,p_2\in D^-v_0(0)$ such that
\begin{equation}  \label{2025-01-19:00}
    \langle p_1,h\rangle
    <
        \langle p_2,h\rangle.
\end{equation}
Since $h\in R(0)\setminus \{0\}$, there exist $a_1,\ldots,a_k$ in $A$, and non-null vectors $h_1\in \range(\sigma(0,a_1))$,\ldots,
$h_k\in \range(\sigma(0,a_k))$,
such that $h=\sum_j h_j$.
Notice that, by \textsc{Claim I},  $v_0$ is differentiable at $0$ along each $h_j$.
Moreover,  
\eqref{2025-01-19:00} forces that
for some $j^*\in\{1,\ldots,k\}$ we must have
\begin{equation}  \label{2025-01-19:01}
    \langle p_1,h_{j^*}\rangle
    \neq
        \langle p_2,h_{j^*}\rangle.
\end{equation}
But \eqref{2025-01-19:01}, again by
\cite[Theorem~23.4]{rockafellar1970},
implies that $v_0$ is not differentiable at $0$ along $h_{j^*}$.
This
provides the desider contradiction and completes the proof of \textsc{Claim II}.
\vskip0.5em
\noindent \textsc{Claim III.}
The function $v$ is
differentiable at $x$ along the subsapce $R(x)$, and
\begin{equation*}
  D_{R (x)}v(x)=P_{R (x)}D^-v(x).
\end{equation*}
\vskip0.5em
\noindent
Let  $v_0$ be as in the proof of \textsc{Claim I}.
By \textsc{Claim II}, we know that $v_0$ is differentiable along each $h\in R(x)$.
Then, taking into account \cite[Theorem~23.4]{rockafellar1970},
$P_{R(x)}D^-v_0(x)$  must be a singleton, and so the same holds true for
$P_{R(x)}D^-v(x)$.
On the other hand,
again due to the differentiability of
$v_0$  along each $h\in R(x)$, and
by applying \cite[Theorems~23.4 and 25.1]{rockafellar1970}
to the restriction $v_{0|R(x)}$,
we have that
$D^-_{R(x)}v_0$ is a singleton and that $v_0$ is differentiable at $x$ along $R(x)$,
and so the same holds true for $v$.
Moreover, by the very definition, we always have $D^-_{R(x)}v(x)\supseteq P_{R(x)}D^-v(x)$.
It then follows 
$D_{R(x)}v(x)=P_{R(x)}D^-v(x)$.

\vskip0.5em
\noindent \emph{Proof of \eqref{PropIt2}.}
By \eqref{PropIt1}, 
$v$ is differentiable at $x$ along $R(x)$, hence along $S(x)$, and so the map $D_Sv$ is well-defined.
Moreover, since we clearly have $D_{S(x)}v(x)=P_{S(x)}D_{R(x)}v(x)$, by
\eqref{PropIt1} we also have $D_{S(x)}v(x)=P_{S(x)}D^-v(x)$.
It remains to argue why $x\mapsto P_{S(x)}D^-v(x)$ is continuous on $\mathcal{D}$.
But this is a straightforward consequence
of \cite[Prop.\ 3.3.4(a)]{CannarsaSinestrari} and the continuity of $P_S$, after recalling that
 $D^-v$ is 
locally bounded on $\mathcal{D}$.
The final statement of (\ref{PropIt2}) is derived by setting $S(x)\coloneqq
R(x)=  \mathbb{R}^n$.
\end{proof}

\section{Applications to stochastic drift-control problems}
\label{sec:drift-control}

In this section, we illustrate some applications of our results to specific classes of stochastic optimal control problems. Let \((\Omega, \mathcal{F}, \mathbb{F} \coloneqq (\mathcal{F}_t)_{t \geq 0}, \P)\) be a complete filtered probability space, where \((W_t)_{t \geq 0}\) is an \(m\)-dimensional Brownian motion with respect to \((\mathbb{F}, \P)\).

\subsection{Partial regularity of the value function}
Let \(A\) be a Borel space, and let \(\mathcal{A}\) denote a suitable subset of \(A\)-valued \(\mathbb{F}\)-progressively measurable processes. We consider  functions \(\beta \colon \mathbb{R}^n \times A \to \mathbb{R}^n\) and \(\sigma \colon \mathbb{R}^n \to \mathbb{R}^{n \times m}\), both assumed to be measurable, and study the drift-controlled stochastic differential equation (SDE):
\begin{equation}
\label{eq:SDEdriftcontrol}
\begin{dcases}
    dX_t = \beta(X_t, \alpha_t)dt + \sigma(X_t)dW_t, \qquad t > 0, \\
    X_0 = x \in \mathbb{R}^n,
\end{dcases}
\end{equation}
where \(\alpha \in \mathcal{A}\). 
Next, consider a measurable cost function \(g \colon \mathbb{R}_+ \times \mathbb{R}^n \times A \to \mathbb{R}\) and a discount rate \(\rho \colon \mathbb{R}^n \to \mathbb{R}_+\). The value function of the  stochastic control problem is
\begin{equation*}
V(x) \coloneqq \sup_{\alpha \in \mathcal{A}} \mathcal{J}(x; \alpha), \qquad x \in \mathbb{R}^n,
\end{equation*}
where the objective functional \(\mathcal{J}:\mathbb{R}^n \times \mathcal{A} \to \mathbb{R}\) is given by
\begin{equation*}
\mathcal{J}(x; \alpha) \coloneqq \mathbb{E} \left[ \int_0^\infty e^{-\int_0^t \rho(X_s)ds} g(X_t, \alpha_t) dt \right], \qquad x \in \mathbb{R}^n, \ \alpha \in \mathcal{A}.
\end{equation*}

Under standard assumptions on the data (see Remark \ref{remrem}(i)), the control problem is well-posed and finite: for all \(x \in \mathbb{R}^n\) and $\alpha \in \mathcal{A}$, the SDE admits a unique strong solution,  the functional $\mathcal{J}(x,\alpha)$ is well-defined and finite, and the value function \(V\) is also finite.

Under appropriate conditions, it can be shown (see Remark \ref{remrem}(ii)) that \(V\) is continuous and satisfies the \emph{Dynamic Programming Principle} (DPP). As a consequence, \(V\) can be typically shown to be a viscosity solution to the associated semilinear Hamilton-Jacobi-Bellman (HJB) equation:
\begin{equation*}
\mathcal{H}(x, v(x), D v(x), D^2 v(x)) = 0,
\end{equation*}
where 
\begin{equation*}
\mathcal{H}(x, r, p, P) \coloneqq \sup_{a \in A} \Big\{ g(x, a) + \langle \beta(x, a), p \rangle \Big\} + \frac{1}{2} \Tr[\sigma(x)\sigma^{*}(x)P] - \rho(x)r.
\end{equation*}
for \(x \in \mathbb{R}^n\), \(r \in \mathbb{R}\), \(p \in \mathbb{R}^n\), and \(P \in \mathcal{S}_n\).
Moreover, by direct estimates on the  functional $\mathcal{J}$, it is often possible to prove that \(V\) is locally semiconvex (see Remark \ref{remrem}(iii)). This  in turn allows us to apply our Theorem \ref{prop:main} above, yielding the partial regularity of \(V\).

\begin{remark}\label{remrem}

\begin{enumerate}[(i)]
\item  Typical requirements for the well-posedness of the control problem are the following ones: The coefficients $\beta,\sigma,g,\rho$ are Lispchitz-continuous and satisfy suitable growth conditions (notice that Lipschitz-continuity uniform over $A$ and sublinear growth in $x$ of $\beta$ and $\sigma$ guarantee existence and uniqueness of a strong solution to the controlled SDE \ref{eq:SDEdriftcontrol});  $\rho$ is sufficiently large (with respect to the growth of $g$ and the expected growth of $X$) in order to guarantee finiteness of the control functional and of the value function. We refer, e.g., to \cite[Ch.\,3]{pham}.

\item The fact that $V$ satisfies a Dynamic Programming Principle and that it is a viscosity solution to the associated HJB equation is quite standard in stochastic optimal control: The interested reader may refer to \cite[Chapters\, 3 and 4]{pham}. It is worth noticing that, in order to apply Theorem \ref{prop:main}, it is only needed that $V$ is a viscosity \emph{supersolution} to the HJB equation, a property that directly follows from the ``easy direction'' of the DPP. 

\item Semiconvexity of the value function of maximization problems in stochastic control is a well established result when the data are semiconvex. In particular, extending to the infinite time-horizon discounted case the approach followed in the proof of \cite[Prop.\ 4.5]{YongZhou}, one can show that $V$ is semiconvex if the following conditions are satisfied: (a) The discount rate $\rho$ is such that $\inf_{x\in \R^n}\rho(x) \geq \rho_o$, for some $\rho_o>0$ large enough; (b) $g$ is semiconvex in $x$, uniformly with respect to $a \in A$; (c) $\beta,\sigma$ are differentiable with respect to $x$ and such that there exists  $L>0$ such that  
$$|\beta_x (\bar x, a) - \beta_x (x, a)| + |\sigma_x (\bar x) - \sigma_x(x)|  \leq L |\bar x - x|,  \ \ \forall  x, \bar x \in \R^n,  \ a \in A.$$

\end{enumerate}
\end{remark} 
 
\subsection{Partial regularity for optimal feedback maps and regularity upgrade}
\label{sec:OC-regupgrade}

The partial regularity of the value function $V$ plays a crucial role in constructing optimal feedback (or synthesis) maps for stochastic drift-control problems. Additionally, it serves as an intermediate step toward attaining higher regularity in the solution. These aspects are heuristically illustrated below (we refer, in particular, to  \cite{fefe} for a specific example demonstrating the application of these methods).

Consider the stochastic drift-control problem discussed in the previous subsection, but now with controlled dynamics having the following separable structure of the drift:
\begin{equation*}
  \begin{dcases}
    dX_t=\beta_{1}(X_t,\alpha_t)dt+\beta_{0}(X_{t})dt +\sigma(X_t)dW_t,\qquad \forall t > 0,\\
    {\phantom{d}}X_0=x\in \mathbb{R}^n.
  \end{dcases}
\end{equation*}
The associated HJB equation reads as
\begin{equation}\label{HJBcontrol}
\mathcal{L}^{0}v(x)+ \mathcal{N}(x,Dv(x))=0,
\end{equation}
where
\begin{align*}
  \mathcal{L}^0 v(x) &\coloneqq \langle \beta_{0}(x),Dv(x)\rangle 
  + \frac{1}{2}\Tr[\sigma(x)\sigma^{*}(x)D^2v(x)] - \rho(x)v(x), \\
  \mathcal{N}(x,Dv(x)) &\coloneqq \sup_{a\in A} 
  \Big\{g(x,a) + \langle \beta_{1}(x,a),Dv(x)\rangle\Big\}.
\end{align*}

Notice that \(\mathcal{L}^0\) is a linear operator. In contrast, the structure of the nonlinear operator \(\mathcal{N}\) -- which is the one involved in the optimization -- depends only on the projection of \(Dv\) onto the subspaces $\range(\beta_{1}(x,a))$ for \(a \in A\). Precisely, interpreting 
$$
\langle \beta_{1}(x,a),Dv(x)\rangle= \langle \beta_1(x,a),D_{\range(\beta_{1}(x,a))}v(x)\rangle
$$
and letting $$S(x):= \mbox{Span} \bigcup_{a\in A}\range(\beta_{1}(x,a)),$$
we can rewrite 
$$
\mathcal{N}(x,Dv(x))= \widehat{\mathcal{N}}(x,D_{S(x)}v(x)) \coloneqq \sup_{a\in A} 
  \Big\{g(x,a) + \big\langle \beta_{1}(x,a),D_{S(x)}v(x)\big\rangle\Big\}.
$$
Now, our result provides the existence, at each $x\in\R^n$, of $D_{\range(\sigma(x))}v(x)$  for any semiconvex supersolution $v$ to \eqref{HJBcontrol} (in particular,  for the value function $V$ of the optimal control problem). Hence, if 
\begin{equation}\label{structure}
S(x)\subseteq  \range(\sigma(x)),
\end{equation}
we can give a classical sense to the nonlinear term $ \widehat{\mathcal{N}}(x,D_{S(x)}v(x)) $. This proves to be important for two key aspects.
\begin{enumerate} 
\item The candidate optimal feedback map of the stochastic control problem is then well-defined (in the classical sense) as the multivalued map 
$$
G(x):= \mbox{argmax}_{a\in A} \Big\{g(x,a)
    +
       \big\langle \beta_{1}(x,a), D_{S(x)}V(x)\big\rangle
     \Big\}.
$$
Using this map, one may then aim to establish a verification theorem within the framework of viscosity solutions to show that $G$ is an optimal feedback map (see, e.g., \cite[Ch.\,5, Sec.\,2]{YongZhou}; see also, in a deterministic setting, \cite{FeFeTo} for a finite-dimensional problem, and \cite{feta,fegogo} for an infinite-dimensional one). 
\medskip

\item One might also go further to improve the regularity. We provide only a sketch of the argument, which must be rigorously developed on a case-by-case basis. We refer to \cite{fefe} for an example. By freezing \(f(x) := D_{S(x)}V(x)\) in the HJB equation, \(V\) becomes a viscosity solution to the \emph{linear} PDE:
\begin{equation}
    \label{eq:linearHJB}
    \mathcal{L}^0 v(x)+ \widehat{\mathcal{N}}(x,f(x))=0.
\end{equation}
Now, if the operator $\mathcal{L}^0$ is hypoelliptic (see \cite{H} for the so-called H\"ormander conditions), then, under suitable assumptions on the data, we are in the position of claiming that there exists a classical solution $v\in C^{2}$ to \eqref{eq:linearHJB} on balls $B_R$, with Dirichlet boundary condition $v=V$ on $\partial B_R$. Notice that the $C^2$-property is due to the fact that H\"ormander conditions guarantee existence of a smooth transition density for the uncontrolled  It\^o-diffusion process having $\mathcal{L}^0$ as infinitesimal generator. Then, by standard results on uniqueness of viscosity solutions (see, e.g., \cite{CIL}), it is matter to identify $V$ with $v$ and thus conclude that $V\in C^{2}$. As a byproduct of this regularity of $V$, it becomes possible to prove a classical verification theorem (see, \cite[Ch.\,5, Sec.\,1]{YongZhou}), demonstrating that $G$ from item (1) above is indeed an optimal feedback map.
\end{enumerate}
  
\begin{remark}
In the discussion above, the key assumption \eqref{structure} aligns with a well-established condition for the nonlinear part of the Hamiltonian, commonly found in the literature on stochastic optimal control and semilinear HJB equations. This theoretical framework, pioneered by the seminal work of Pardoux and Peng \cite{PaPe}, offers a probabilistic representation of semilinear PDEs, analogous to the Feynman-Kac formula for linear PDEs, through systems of forward-backward SDEs.  
%
%
\end{remark}

\section{Applications to Optimal Stopping and Impulse Control}
\label{sec:OSIC}

Throughout this section, let $(\Omega, \mathcal{F}, \mathbb{F}, \P)$ be again a complete filtered probability space, with filtration $\mathbb{F}$ satisfying the usual conditions of completeness and right-continuity, on which it is defined an $\R^m$-valued Brownian motion $W:=(W_t)_{t\geq0}$ with respect to $(\mathcal{F}, \P)$. Furthermore, we shall denote by $\mathcal{T}$ the set of $\mathbb{F}$-stopping times. 

\subsection{Optimal Stopping Problems}
\label{sec:oscp}

In this section we provide an application of Theorem \ref{prop:main} to the problem of optimal stopping of a multi-dimensional It\^o-diffusion. 

Let $\beta:\R^n \to \R^n$ and $\sigma:\R^n \to \R^{n\times m}$ be measurable and such that 
the stochastic differential equation
\begin{equation}
\label{eq:SDE}
\d X_t = \beta(X_t) \d t + \sigma(X_t) \d W_t, \quad t>0, \quad X_0=x \in \R^n,
\end{equation}
admits a unique strong solution, denoted by
$(X^{x}_t)_{t\geq 0}$.
Introduce the optimal stopping problem
\begin{equation}
\label{eq:def-u}
V(x)\coloneqq  \sup_{\tau \in \mathcal{T}}\E\Big[e^{-\int_0^{\tau}\rho(X^x_s)\d s} \,g(X^{x}_{\tau})\Big], \quad x \in \R^n,
\end{equation}
for $\rho\colon \mathbb{R}^n \to \mathbb{R}_+$ measurable, and $g \in C(\R^n;\R)$.

Assuming then that the stopping functional $\E[e^{-\int_0^{\tau}\rho(X^x_s)\d s} \,g(X^{x}_{\tau})]$ is well defined for any $\tau \in \mathcal{T}$ and that $V$ is finite and locally semiconvex, one has that both claims of Theorem \ref{prop:main} apply to $V$. To see this, notice that taking $\mathcal{O}=\R^n$, and easily adapting to our stationary  discounted setting the first step of the proof of \cite[Thm.\ 7.7]{Touzi}, one has that $V$ is a viscosity supersolution to
\begin{equation}
    \label{eq:VI}
    \min\big\{-\mathcal{L}v(x), v(x) - g(x)\big\}=0, \quad x \in \R^n,
\end{equation}
where (cf.\ \eqref{2024-09-22:00} with $g\equiv 0$)
$$\mathcal{L}v(x):=\langle \beta(x), Dv(x)\rangle+\frac{1}{2} \Tr \left( \sigma(x)\sigma^*(x)D^2v(x)\right) -\rho(x)v(x).$$
Hence, $V$ is a viscosity supersolution to \eqref{2024-07-28:12}, with $\mathcal{H}=\mathcal{L}$.

\begin{remark}
    \label{rem:wellposedOS}
As already noticed, Lipschitz-continuity and sublinear growth conditions on $\beta$ and $\sigma$  are typical assumptions to guarantee existence and uniqueness of a strong solution to \eqref{eq:SDE}. Furthermore, polynomial growth condition on $g$, $\rho$ sufficiently large, and the aforementioned Lipschitz-property of $\beta,\sigma$ yield that -- due to standard moments estimates for solutions to SDEs -- stopping functional and value $V$ are well defined and actually finite. 
\end{remark}

\begin{remark}
\label{rem:easyDP}
Notice that, in the discussion above we are only using that $V$ is a viscosity supersolution to the linear part of the variational inequality \eqref{eq:VI}. The proof of this viscosity supersolution property directly derives from the ``easy direction" of the dynamic programming principle (see, e.g., the first step of the proof of \cite[Thm.\ 7.7]{Touzi}).
\end{remark}

\begin{remark}    
\label{rem:semiconvexOS}
The assumption of local semiconvexity of \(V\) must be verified on a case-by-case basis. See Remark \ref{rem:exampleOS} below for a relevant example. However, taking, e.g., $\rho$ such that $\inf_{x\in \R^n}\rho(x) \geq \rho_o$, for some $\rho_o>0$ large enough, standard estimates on solutions to stochastic differential equations employing Gr\"onwall lemma and the Burkholder-Davis-Gundy inequality (see, e.g., the estimates in the proof of \cite[Prop.\ 4.5]{YongZhou} for a finite time-horizon control problem) guarantee that if, for $\lambda\in [0,1]$, $L>0$, $M>0$, $p\geq 2$, and for every $x, \bar x \in \R^n$
  \begin{equation*}
    \begin{split}
               |h (x)| & \leq L (1+|x|),\\ 
        |h (\bar x) - h (x)|  & \leq L |\bar x - x|, \\
         h ( \lambda \bar x + (1-\lambda)x) - \lambda h(\bar x) - (1-\lambda) h ( x)   &\leq  L \lambda (1-\lambda)  |\bar{x}-x|^2,
      \end{split}
    \end{equation*}
\noindent   for $h \in \{\beta,\sigma\}$, and 
   $$
   \begin{aligned}
    |g(x)| &\leq M(1+|x|^p), \\
    |g(\bar x) - g (x) | &\leq M (1+|\bar x|^{p-1} + |x|^{p-1})|\bar x -x |, \\
   g ( \lambda \bar x + (1-\lambda)x) - \lambda g(\bar x) - (1-\lambda) g ( x)  &\leq  M \lambda (1-\lambda) (1+|\bar x|^{p-2} + |x|^{p-2}) |\bar{x}-x|^2, 
    \end{aligned}
    $$
    then, for every $x, \bar x \in \R^n$, one has
    \begin{equation*}
      \begin{split}
    |V(x)| &\leq M (1+|x|^p), \\
    |V (\bar x) - V (x) | &\leq M (1+|\bar x|^{p-1} + |x|^{p-1})|\bar x -x |, \\
    V ( \lambda \bar x + (1-\lambda)x) - \lambda V(\bar x) - (1-\lambda) V( x)  &\leq  M \lambda (1-\lambda) (1+|\bar x|^{p-2} + |x|^{p-2}) |\bar{x}-x|^2. 
  \end{split}
\end{equation*} 
\end{remark}

\begin{remark}
    \label{rem:exampleOS}
    A benchmark relevant example from Mathematical Finance under which $V$ is locally semiconvex (actually, convex) is that of an {\emph{exchange-of-baskets problem}} (see \cite{ChrSal, HuOksendal, NisRogers}) in which, for $i,j \in \{1, \dots,n\}$, for some $\mu_i\in \R$ and some $a_{ij}\geq 0$,
    $$b_i(x) = \mu_i x_i, \quad \sigma_{ij}(x)=a_{ij}x_i, \quad \rho(x)=\rho_o>\max_{i=1,\dots,n}\mu_i, \quad x\in \R^n_{++},$$
    and where
    $$g(x) = \Big(K - \sum_{i=1}^n x_i\Big)^+, \quad x \in \R^n_{++},$$
for some strike price $K>0$.
   \end{remark}

\subsection{Impulse control problems}
\label{sec:icp}

In this section, we consider the case of an impulse control problem for a multi-dimensional It\^o-diffusion process. 

Consider the set $\mathcal{A}$ of all the couples $\mathbf{a}\coloneqq \{(\tau_i,\xi_i)\}_{i \in \mathbb{N}}$, where:
\begin{itemize}
    \item[(a)] $\tau_i \in \mathcal{T}$, such that $\tau_i \leq \tau_{i+1}$ over the set $\{\tau_i<\infty\}$, for any $i\in \mathbb{N}$, and $\lim_{i\to\infty}\tau_i=\infty$ $\P$-a.s.;
    \item[(b)] $\xi_i \in \R^n$, such that $\xi_i \in \mathcal{F}_{\tau_i}$, for any $i\in \mathbb{N}$,
    \end{itemize}
    
    Let $\beta:\R^n \to \R^n$ and $\sigma:\R^n \to \R^{n\times m}$ measurable and such that, for any $\mathbf{a} \in \mathcal{A}$, there exists a unique (up to indistinguishability) $\mathbb{F}$-adapted c\`adl\`ag process $(X^{x,\mathbf{a}}_t)_{t\geq 0}$ solving in the strong sense the following SDE in integral form:
\begin{equation}
    \label{eq:SDE-IC}
    X^{x,\mathbf{a}}_t = x + \int_0^t b(X^{x,\mathbf{a}}_s) \d s + \int_0^t \sigma(X^{x,\mathbf{a}}_s) \d W_s + \sum_{i \in \mathbb{N}\colon \, \tau_i \leq t} \xi_i, \quad t \geq 0^-, \quad x \in \R^n.
\end{equation}

For measurable functions $g:\R^n \to \R_+$, $\rho:\R^n \to \R_+$, and constants $c_0>0$, $c_1>0$, introduce the optimal impulse control problem
\begin{equation}
    \label{eq:IC}
    V(x)\coloneqq \sup_{\mathbf{a} \in \mathcal{A}}\E\bigg[\int_0^{\infty}e^{-\int_0^{t}\rho(X^{x,\mathbf{a}}_s) \d s} g(X^{x,\mathbf{a}}_t) \d t - \sum_{i\in \mathbb{N}} e^{-\int_0^{\tau_i}\rho(X^{x,\mathbf{a}}_s) \d s} \big(c_0|\xi_i| + c_1\big)\bigg], \quad x \in \R^n.
\end{equation}
where the impulse control functional is given by
\begin{equation*}
\label{eq:impulsefunct}
\mathcal{J}(x;\textbf{a}):=\E\bigg[\int_0^{\infty}e^{-\int_0^{t}\rho(X^{x,\mathbf{a}}_s) \d s} g(X^{x,\mathbf{a}}_t) \d t - \sum_{i\in \mathbb{N}} e^{-\int_0^{\tau_i}\rho(X^{x,\mathbf{a}}_s) \d s} \big(c_0|\xi_i| + c_1\big)\bigg], \quad x \in \R^n,\, \textbf{a} \in A.    
\end{equation*}

If the impulse control functional is well defined and $V$ is finite and locally semiconvex, then both claims of Theorem \ref{prop:main} apply to $V$.

As a matter of fact, to see this it is enough to take $\mathcal{O}=\R^n$ and exploit the second step in the proof of \cite[Thm.\ 12.8]{OksSul} (see also \cite[Thm.\ 3.2]{Guo}), to prove that $V$ is a viscosity supersolution to 
\begin{equation}
    \label{eq:VI-impulse}
    \min\big\{-\mathcal{L}v(x) -g(x), v(x) - \mathcal{M}v(x)\big\}=0, \quad x \in \R^n,
\end{equation}
for $\mathcal{M}v(x)\coloneqq \sup_{\xi \in \R^n}\big(v(x+\xi) - c_0|\xi| - c_1\big)$
and (cf.\ \eqref{2024-09-22:00})
$$\mathcal{L}v(x):=g(x) + \langle \beta(x), Dv(x)\rangle+\frac{1}{2} \Tr \left( \sigma(x)\sigma^*(x)D^2v(x)\right) -\rho(x)v(x).$$
Hence, $V$ is a viscosity supersolution to \eqref{2024-07-28:12}, with $\mathcal{H}=\mathcal{L}$.

\begin{remark}
The impulse control functional $\mathcal{J}$
is well defined over $\R^n \times \mathcal{A}$ (but potentially infinite) if
$$\E\Big[ \sum_{i\in \mathbb{N}} e^{-\int_0^{\tau_i}\rho(X^{x,\mathbf{a}}_s) \d s} \big(|\xi_i| + 1\big)\Big] < \infty.$$
Finiteness of the value function $V$ is typically achieved by assuming suitable growth conditions on $g$, sublinear growth and Lipschitz-continuity of $\beta,\sigma$ (which, in particular, also ensure well-posedness of the controlled SDE) and taking the discount factor $\rho$ sufficiently large (to compensate the growth of $g$ and the expected trend of the controlled process). 
\end{remark}

\begin{remark}
    \label{rem:DPimpulse}
As for the previous cases of drift-control and stopping problems, in this section we are only using that $V$ is a viscosity supersolution to the linear part of the quasi-variational inequality \eqref{eq:VI-impulse}.
Again, the proof of such viscosity supersolution property directly derives from the ``easy direction" of the dynamic programming principle.
\end{remark}

\begin{remark}
    \label{rem:ICsemiconvex}
    The assumption of local semiconvexity of $V$ needs to be verified on a case by case basis. Typically, this is true if the discount rate $\rho$ is sufficiently large and if $g$ is locally semiconvex and satisfies suitable growth conditions. Precise estimates have been obtained in \cite{FeRoTa}, in a one-dimensional setting. However, it is easily seen that the approach developed in \cite{FeRoTa} (under Assumptions 2.1 and 3.3 therein) can be extended to multi-dimensional frameworks as well in order to ensure the desired local semiconvexity of $V$ as in \eqref{eq:IC}.
\end{remark}

\begin{remark}
    \label{rem:SSC}
    Statements similar to those for impulse and optimal stopping problems can be made also for a singular stochastic control problem of the form 
    \begin{equation}
    \label{eq:OC}
    V(x)\coloneqq \sup_{\mathbf{\nu} \in \mathcal{A}}\E\bigg[\int_0^{\infty}e^{-\int_0^{t}\rho(X^{x,\mathbf{\nu}}_s) \d s} g(X^{x,\mathbf{\nu}}_t) \d t - \int_{[0,\infty)} e^{-\int_0^{\tau_i}\rho(X^{x,\mathbf{\nu}}_s) \d s} c_0 \d |\mathbf{\nu}|_t\bigg], \quad x \in \R^n.
\end{equation}

\noindent Here: $\mathcal{A}$ is the set of $\mathbb{F}$-adapted, $\R^n$-valued stochastic process $\mathbb{\nu}$ with paths that are c\`adl\`ag, locally of bounded variation (componentwise), and such that $\nu_{0^-}=0$ $\P$-a.s.; $|\mathbf{\nu}|$ denotes the total variation
process induced by $\mathbb{\nu} \in \mathcal{A}$ (\,\footnote{Precisely, $|\nu|_t:=\sup\big\{\sum_{i=1}^k |\nu_{t_i} - \nu_{t_{i-1}}|:\, 0=:t_0 < t_1 < \dots < t_k:=T\big\}$.}); $(X^{x,\mathbf{\nu}}_t)_{t\geq0}$ is the unique strong solution to
\begin{equation}
    \label{eq:SDE-SSC}
    X^{x,\mathbf{\nu}}_t = x + \int_0^t b(X^{x,\mathbf{\nu}}_s) \d s + \int_0^t \sigma(X^{x,\mathbf{\nu}}_s) \d W_s + \nu_t, \quad t \geq 0^-, \quad x \in \R^n.
\end{equation}
However, differently to optimal stopping and impulse control problems, in singular stochastic control problems the $C^1$-property of the value function is typically not sufficient for a characterization of the optimal control, which in fact relies on a second-order smooth-fit property (i.e.\ the $C^2$-regularity of the value function in the direction of the controlled state variable). 

Furthermore, under suitable concavity requirements, one can show that $V$ is concave (see, e.g., \cite{DiaFe} and references therein). This, combined with the (local) semiconvexity of $V$ actually already implies $V\in C^{1,\text{Lip}}_{\text{loc}}(\R^n)$.
\end{remark}



\smallskip 
\textbf{Acknowledgements.}  
 {\footnotesize{Giorgio Ferrari was financed by the \emph{Deutsche Forschungsgemeinschaft} (DFG, German Research Foundation) - Project-ID 317210226 - SFB 1283. 

Salvatore Federico is a member of the GNAMPA-INdAM group and of the PRIN project 2022BEMMLZ ``Stochastic control and games and the
role of information'', financed by the Italian Ministery of University and Research.
}



\begin{thebibliography}{199}

\bibitem{bayraktarEmmerlingMenaldi} \textsc{Bayraktar, E., Emmerling, T., Menaldi, J.L.}\ (2013). On the Impulse Control of Jump Diffusions. \emph{SIAM J.\ Control Optim.}\ \textbf{51(3)} 2612-2637.

\bibitem{BensLions} \textsc{Bensoussan, A., Lions, J.-L.}\ (2011). \emph{Applications of Variational Inequalities in Stochastic Control}. Elsevier.

\bibitem{BeLioIC} \textsc{Bensoussan, A., Lions, J.-L.}\ (1987). \emph{Impulse Control and Quasi Variational Inequalities}. John Wiley and Sons.

\bibitem{BM} 
 \textsc{Braga, J.E.M., Moreira, D.}\ (2018). Inhomogeneous Hopf–Oleinik Lemma and
regularity of semiconvex supersolutions via new
barriers for the Pucci extremal operators.  \emph{Adv. Math.} \textbf{334} 184--242.

\bibitem{Cadenillas} \textsc{Cadenillas, A., Zapatero, F.}\ (1999). Optimal Central Bank Intervention in the Foreign Exchange Market. \emph{J.\ Econom.\ Theory} \textbf{87(1)} 218-242.



\bibitem{CaffarelliSalsa} \textsc{Caffarelli, L.A., Salsa, S.}\ (2005). \emph{A Geometric Approach to Free Boundary Problems}. Vol.\ 68. American Mathematical Soc.

\bibitem{CannarsaSinestrari} \textsc{Cannarsa, P., Sinestrari C.}\ $(2004)$. \emph{Semiconcave Functions, Hamilton–Jacobi Equations, and Optimal Control}. Birkhauser.


\bibitem{ChrSal} \textsc{Christensen, S., Salminen, P.}\ $(2018)$. Multidimensional Investment Problem. \emph{Math\ Finan.\ Econ.}\ \textbf{12} 75-95.

\bibitem{CIL}
\textsc{Crandall, M., Ishii, H., Lions, P.-L.}\ (1992). {User's Guide to Viscosity Solutions of Second Order Partial Differential Equations}. \emph{Bull.\ Amer.\ Math.\ Soc.}\ \textbf{27(1)} {1-67}.

\bibitem{DavisGuoWu} \textsc{Davis, M.H., Guo, X., Wu, G.}\ (2010). Impulse Control of Multidimensional Jump Diffusions. \emph{SIAM J.\ Control Optim.}\ \textbf{48(8)} 5276-5293.

\bibitem{DeAPeskir} \textsc{De Angelis, T., Peskir, G.}\ (2020). Global $C^1$ Regularity of the Value Function in Optimal Stopping Problems. \emph{Ann.\ Appl.\ Probab.}\ \textbf{30} 1007-1031.

\bibitem{DiaFe} \textsc{Dianetti, J., Ferrari, G.}\ (2023). Multidimensional Singular Control and Related Skorokhod Problem: Sufficient Conditions for the Characterization of Optimal Controls. \emph{Stoch.\ Process.\ Appl.}\ \textbf{162} 547-592.

\bibitem{DP} \textsc{Dixit, A., Pindyck, R.}\ $(1994)$. \emph{Investment under Uncertainty}. Princeton University Press.

\bibitem{fefe} \textsc{Federico, S., Ferrari, G.} \ (2021), 
Taming the Spread of an Epidemic by Lockdown Policies. \emph{J.\ Math.\ Econ.}\ \textbf{93} {102453}.

\bibitem{FeFeTo} \textsc{Federico, S., Ferrari, G., Torrente, M.L.}\ (2024). Optimal Vaccination in a SIRS Epidemic Model. \emph{Econ. Theory} \textbf{77} 49-74.

\bibitem{fegogo} \textsc{Federico, S., Goldys, B., Gozzi, F.}\ (2011). HJB Equations for the Optimal Control of Differential Equations with Delays and State Constraints, II: Verification and Optimal Feedbacks. \emph{SIAM J.\ Control Optim.}\ \textbf{49(6)} {2378-2414}.

\bibitem{FeRoTa} \textsc{Federico, S., Rosestolato, M., Tacconi, E.}\ (2019). Irreversible Investment with Fixed Adjustment Costs: A Stochastic Impulse Control Approach. \emph{Math.\ Financ.\ Econ.}\ \textbf{19(4)} 579-616.

\bibitem{feta} \textsc{Federico, S., Tacconi, E.}\ (2019). Dynamic Programming for Optimal Control Problems with Delays in the Control Variable. \emph{SIAM J.\ Control Optim.} \textbf{52(2)} {1203-1236}.

\bibitem{Friedmann} \textsc{Friedman, A.}\ (2020). \emph{Variational Principles and Free-Boundary Problems}. Dover.

\bibitem{Guo} \textsc{Guo, X., Wu, G.}\ (2009). Smooth Fit Principle for Impulse Control of Multidimensional Diffusion Processes. \emph{SIAM J.\ Control Optim.}\ \textbf{48(2)} 594-617.

\bibitem{KindStampacchia} \textsc{Kinderlehrer, D., Stampacchia, G.}\ (2000). \emph{An Introduction to Variational Inequalities and their Applications}. Society for Industrial and Applied Mathematics.

\bibitem{H} H\"ormander, L.\ (2003). \emph{The Analysis of Linear Partial Differential Operators I: Distribution Theory and Fourier Analysis}. Classics in Mathematics series. Springer.


\bibitem{Korn} \textsc{Korn, R.}\ (1999). Some Applications of Impulse Control in Mathematical Finance. \emph{Math.\ Meth.\ Oper.\ Res.}\ \textbf{50} 493–518.

\bibitem{HuOksendal} \textsc{Hu, Y., \O ksendal, B.}\ (1998). Optimal Time to Invest when the Price Processes are Geometric Brownian Motions. \emph{Finan.\ Stoch.}\ \textbf{2(3)} 295-310.

\bibitem{NisRogers} \textsc{Nishide, K., Rogers, L.C.G.}\ (2011). Optimal Time to Exchange two Baskets. \emph{J.\ Appl.\ Probab.}\ \textbf{48(1)} 21–30.

\bibitem{OksSul} \textsc{\O ksendal, B., Sulem, A.}\ (2019). \emph{Applied Stochastic Control of Jump Diffusions}. Springer.

\bibitem{PaPe} \textsc{Pardoux, E., Peng, S.} \ (1992). Backward Stochastic Differential Equations and Quasilinear Parabolic Partial Differential Equations. \emph{Lecture Notes in Control and Inform.\ Sci.}\ \textbf{176} 200–217. 

\bibitem{peskir2006optimal} \textsc{Peskir, G., Shiryaev, A.}\ (2006). \emph{Optimal Stopping and Free-boundary Problems}. Springer.

\bibitem{pham} \textsc{Pham, H.} \ (2009). \emph{Continuous-time Stochastic Control and Optimization with Financial Applications}. Springer.

\bibitem{rockafellar1970}\textsc{Rockafellar, R.\ T.}\ (1970). \emph{Convex Analysis}. Princeton University Press, Princeton (New Jersey).

\bibitem{Touzi} \textsc{Touzi, N.}\ (2013). \emph{Optimal Stochastic Control, Stochastic Target Problems, and Backward SDE}. Springer.

\bibitem{Wald} \textsc{Wald, A.}\ (1947). \emph{Sequential Analysis}. Wiley, New York; Chapman \& Hall, London.

\bibitem{YongZhou} \textsc{Yong, J., Zhou, X.Y.}\ (1999). \emph{Stochastic Controls - Hamiltonian Systems and HJB Equations}. Springer.

\end{thebibliography}
\end{document}